\documentclass[10pt,reqno]{amsart}
\usepackage{graphicx, amssymb, color,pdfsync}
\usepackage[all,cmtip]{xy}
\numberwithin{equation}{section}
\usepackage{mathrsfs,soul,ulem}
\usepackage{tikz}
\usetikzlibrary{matrix,arrows}

\usepackage{hyperref}
\hypersetup{
    colorlinks=true,       
    linkcolor=blue,          
    citecolor=blue,        
    filecolor=blue,      
    urlcolor=blue           
}

\newtheorem{theo}{Theorem}[section]

\newtheorem{lemm}[theo]{Lemma}
\newtheorem{coro}[theo]{Corollary}
\newtheorem{prop}[theo]{Proposition}

\theoremstyle{remark}
\newtheorem{rema}[theo]{\bf Remark}

\newtheorem{example}{\bf Example}

\begin{document}

\title[On the fiber product of Riemann surfaces]{On the fiber product of Riemann surfaces}
\author{Rub\'en A. Hidalgo}
\author{Sebasti\'an Reyes-Carocca}
\author{Ang\'elica Vega}
\thanks{This work was partially supported by Project FONDECYT 1150003, Project Postdoctoral
FONDECYT 3160002, Project Anillo ACT 1415 PIA-CONICYT and Project Redes Etapa Inicial 2017-170071}

\subjclass[2010]{30F10, 30F35}
\keywords{Riemann surface, Fiber product, Fuchsian groups}

\address{Departamento de Matem\'atica y Estad\'{\i}stica, Universidad de La Frontera. Avenida Francisco Salazar 01145, Temuco, Chile}
\email{ruben.hidalgo@ufrontera.cl, sebastian.reyes@ufrontera.cl}
\address{Escuela Colombiana de Ingenier\'ia, Avenida Carrera 45 305-59, Bogot\'a, Colombia}
\email{amvegam@gmail.com}

\begin{abstract}
Let $S_{0}, S_{1}$ and $S_{2}$ be connected Riemann surfaces and let $\beta_{1}:S_{1} \to S_{0}$ and $\beta_{2}:S_{2} \to S_{0}$ be surjective holomorphic maps. The associated fiber product
 $S_{1} \times_{(\beta_{1},\beta_{2})} S_{2}$  has the structure of a  singular Riemann surface, endowed with a canonical map $\beta$ to $S_{0}$ satisfying that $\beta_{j} \circ \pi_{j}=\beta$, where $\pi_{j}$ is coordinate projection onto $S_{j}$. In this paper we provide a Fuchsian description of the fiber product and obtain that 
 if one the maps $\beta_{j}$ is a regular branched cover, then all its irreducible components are isomorphic. 
 In the case that both $\beta_{j}$ are of finite degree,  we observe that the number of irreducible components is bounded above by the greatest common divisor of the two degrees; we study the irreducibility of the fiber product.
 In the case that $S_{0}=\widehat{\mathbb C}$, and $S_{1}$ and $S_{2}$ are compact, we define the strong field of moduli of the pair $(S_{1} \times_{(\beta_{1},\beta_{2})} S_{2},\beta)$ and observe that this field coincides with the minimal field containing the fields of moduli of both pairs $(S_{1},\beta_{1})$ and $(S_{2},\beta_{2})$. Finally, in the case that the fiber product is a connected Riemann surface, we provide an isogenous decomposition of its Jacobian variety.
\end{abstract}

\maketitle

\section{Introduction}

In the category of topological spaces the fiber product is the unique solution of a certain universal problem (see Section \ref{Sec:productofibrado}). By restricting to some subcategory, it might be that the fiber product of two of its elements does not belong  to it. Subcategories for which the fiber product still insides it are the one of (not necessarily irreducible nor smooth) algebraic varieties and the one of schemes (see, for example, \cite{Iitaka}). By contrast,  subcategories over which the fiber product may not still be in there are, for instance, the one of smooth algebraic varieties as in general this process produces singularities and the one of irreducible algebraic varieties as this process may produce reducible objects; in particular, the subcategory of Riemann surfaces. 
The fiber product has been a main tool for constructing examples and counterexamples in algebraic geometry.

This article is devoted to study the fiber product at the level of connected Riemann surfaces.
Let $S_{0}$, $S_{1}$ and $S_{2}$ be connected (and not necessarily compact) Riemann surfaces, let $\beta_{1}:S_{1} \to S_{0}$ and $\beta_{2}:S_{2} \to S_{0}$ be surjective holomorphic maps, and let $S_{1} \times_{(\beta_{1},\beta_{2})} S_{2}$ be its associated fiber product.
There is a natural map \begin{equation} \label{lente}\beta:S_{1} \times_{(\beta_{1},\beta_{2})} S_{2} \to S_{0}\end{equation} such that $\beta=\beta_{j} \circ \pi_{j}$, where $\pi_{j}:S_{1} \times_{(\beta_{1},\beta_{2})} S_{2} \to S_{j}$ is the natural projection.

The fiber product $S_{1} \times_{(\beta_{1},\beta_{2})} S_{2}$ is a (possible non-connected) one-dimensional complex analytic closed subspace of the complex surface $S_{1} \times S_{2}$, which might or might not be smooth. 

The singular locus ${\rm Sing} \subset S_{1} \times_{(\beta_{1},\beta_{2})} S_{2}$ consists of those points having a neighborhood not isomorphic to the unit disc. We observe that the fiber product has the structure of a singular Riemann surface (see Section \ref{singulardef} and Proposition \ref{propo1}).
The space $$S_{1} \times_{(\beta_{1},\beta_{2})}S_{2}-{\rm Sing}$$consists of a  collection of connected Riemann surfaces, say $\widetilde{R}_{k},$ and the map \eqref{lente} restricts to a not necessarily surjective holomorphic map \begin{equation} \label{be}\beta:\widetilde{R}_{k} \to S_{0}\end{equation}(some common branch value of $\beta_{1}$ and $\beta_{2}$ may not be in the image of this map).
Each $\widetilde{R}_{k}$ has a collection of punctures, associated to the points in ${\rm Sing}$, and by filling in these points, 
we obtain a unique, up to biholomorphism, Riemann surface $R_{k}$, called an {\it irreducible component} of the fiber product, and a surjective holomorphic map $\beta:R_{k} \to S_{0}$ extending \eqref{be}.  If both maps $\beta_{1}$ and $\beta_{2}$ have finite degree, then we observe that the number of  irreducible components of the fiber product is bounded above by the greatest common divisor of these two degrees (see Proposition \ref{cota}). Example \ref{ejemplo7} shows that the aforementioned bound is attained.

Now, if we set $S_{1}^{*}=S_{1}-\beta_{1}^{-1}(B)$,  $S_{2}^{*}=S_{2}-\beta_{2}^{-1}(B)$, $S_{0}^{*}=S_{0}-B$, where $B$ is the set consisting of all branch values of either $\beta_{1}$ or $\beta_{2}$, then $\beta_{j}:S_{j}^{*} \to S_{0}^{*}$ is a surjective holomorphic unbranched map and we have
$$S_{1}^{*} \times_{(\beta_{1},\beta_{2})} S_{2}^{*} =\bigcup_{k} R_{k}^{*} \subset S_{1} \times_{(\beta_{1},\beta_{2})} S_{2}-{\rm Sing}=\bigcup_{k} \widetilde{R}_{k} \subset \bigcup_{k} R_{k},$$
where $R_{k}^{*}$ is a connected Riemann surface, this being the complement in $\widetilde{R}_{k}$ of a discrete set of points. 
In Section \ref{Sec:Kleinian} we  provide a Fuchsian group description of the connected components $R_{k}^{*}$ (see Lemma \ref{minimalcomponent}) and we observe that,
if one of the maps $\beta_{j}$ is a regular covering (that is, the quotient map defined by the action of a finite group of conformal automorphisms), then all the irreducible components $R_{k}$ are pairwise isomorphic Riemann surfaces (Corollary \ref{isomorfo}).

This last result is related to the following one. Under the assumptions that $S_{0}$, $S_{1}$ and $S_{2}$ are compact Riemann surfaces, in \cite{H:fiberproduct}, the first author observed that any two irreducible components (after desingularization) of lowest genus are isomorphic, except possibly if their common genus equals $1$, in which case they are still isogenous (this result is not longer true if we allow the algebraic curves to be reducible).

Assume that $S_0=\widehat{\mathbb C}$ and both $S_{1}$ and $S_{2}$ are compact. Then in \cite{Fulton}, Fulton and Hansen proved that the fiber product is connected. This connectedness property may fail if $S_{0}$ has positive genus (see Examples \ref{ejemplo3-1} and \ref{ejemplo3-2}). We also mention that, in \cite{Pakovich}, Pakovich
used the fiber product to study solutions of equations of the form $f(p(x))=g(q(x))$, where $f,g \in {\mathbb C}(z)$ and $p,q:S \to \mathbb{C}$ are meromorphic maps (this equation is related to prime decomposition of rational maps; see also the paper \cite{Fried} for the polynomial case). 

In addition, under the assumptions that $S_0=\widehat{\mathbb C}$ and both $S_{1}$ and $S_{2}$ are compact, in \cite{Pakovich}, Pakovich provided a sufficient condition for the fiber product to be irreducible; this condition being that the degrees of the maps $\beta_{1}$ and $\beta_{2}$ are relatively prime. In the same paper (and under the same assumptions)  necessary and sufficient conditions for irreducibility were provided in terms of the uniformizing Fuchsian groups. By adapting Pakovich's arguments, in the case that the fiber product is connected (we allow $S_0$ to have positive genus or being non-compact and we do not impose compactness assumptions on $S_{1}$ nor $S_{2}$), we provide two conditions, each one implying the irreducibility of the fiber product (see Theorem \ref{teo1}). 

In Section \ref{Sec:fieldofmoduli} we restrict to $S_{0}=\widehat{\mathbb C}$ and both $S_1$ and $S_2$ to be compact (so we may assume all these surfaces to be described by smooth complex algebraic curves, and the maps $\beta_{j}$ to be rational maps over ${\mathbb C}$). Associated to each pair $(S_{j},\beta_{j})$ is its {\it field of moduli}, this being the intersection of all its fields of definition.  We describe an algebraic invariant of the fiber product pair $(S_{1} \times_{(\beta_{1},\beta_{2})} S_{2},\beta)$, called its {\it strong field of moduli}, and we observe that this field is the smallest field containing the fields of moduli of the two corresponding pairs $(S_{1},\beta_{1})$ and $(S_{2},\beta_{2})$ (Theorem \ref{modulifield}). In  Example \ref{Ejemplo:strong} we construct an example where the strong field of moduli is a quadratic extension of the field of moduli.

Finally, in Section \ref{Sec:jacobiana}, assuming the fiber product to be a compact connected Riemann surface, we provide an isogenous decomposition of its Jacobian variety (Theorem \ref{yo}). 
 
Our examples of compact Riemann surfaces will be given by irreducible non-singular projective algebraic curves. In this case, each irreducible component of  the fiber product will be an irreducible projective algebraic curve. If an irreducible component is non-singular, then it represents a compact Riemann surface. Otherwise,  by deleting its singular point we obtain an affine non-singular curve representing an analytically finite Riemann surface, which (after adding the punctures) will represent a compact Riemann surface. Some of the explicit computations have been carried out with the help of  the computational software MAGMA \cite{MAGMA}.

\section{The fiber product of Riemann surfaces}\label{Sec:FiberProduct}
\subsection{The fiber product: the topological setting}\label{Sec:productofibrado}
Let us fix three topological spaces $X_{0}$, $X_{1}$ and $X_{2}$ and two surjective continuous maps
$\beta_{1}:X_{1} \to X_{0}$ and  $\beta_{2}:X_{2} \to X_{0}$.  The {\it fiber product} associated to the pairs $(X_{1},\beta_{1})$ and $(X_{2},\beta_{2})$ is defined as the set
\begin{equation} \label{ast} 
X_{1} \times_{(\beta_{1},\beta_{2})} X_{2}=\{(x_{1},x_{2}) \in X_{1} \times X_{2}: \beta_{1}(x_{1})=\beta_{2}(x_{2})\},
\end{equation}
endowed with the topology induced by the product topology of $X_{1} \times X_{2}.$
There is associated a natural continuous map $$\beta:X_{1} \times_{(\beta_{1},\beta_{2})} X_{2} \to X_{0}$$
such that $\beta=\beta_{1} \circ \pi_{1}=\beta_{2} \circ \pi_{2}$, where $\pi_{j}:X_{1} \times_{(\beta_{1},\beta_{2})} X_{2} \to X_{j}$ is the projection $\pi_{j}(x_{1},x_{2})=x_{j}$ for $j=1,2$.

The fiber product enjoys the following universal property. If $Y$ is a topological space and, for $j=1,2$, there is a  continuous map
$p_{j}:Y \to X_{j}$  such that $\beta_{1} \circ p_{1} = \beta_{2} \circ p_{2}$, then there exists a unique continuous map $h:Y \to X_{1} \times_{(\beta_{1},\beta_{2})} X_{2}$ such that $p_{j}=\pi_{j} \circ h$ (which is given by $h(y)=(p_{1}(y),p_{2}(y))$, for $y \in Y$). The fiber product is, up to homeomorphisms, the unique topological space satisfying the above property.

\subsection{Singular Riemann surfaces}\label{singulardef}
Set ${\mathbb D}=\{z \in {\mathbb C}: |z|<1\}$, the unit disc.

A {\it singular Riemann surface} is defined to be a (not necessarily connected) one-dimensional complex analytic space $X,$ where each point $P \in X$ has a neighborhood  holomorphically equivalent to a set of the form 
$$V_{n,m}:=\{(z,w) \in {\mathbb D}^{2}: z^{n}=w^{m}\},$$for some integers $n,m \ge 1.$  

If $n=1$ or $m=1$, then $V_{n,m}$ is holomorphically equivalent to ${\mathbb D}$. Now, if $n, m \geq 2$, then let 
$d \geq 1$ be the greatest common divisor of $n$ and $m$, and write $n=d\widehat{n}$ and  $m=d\widehat{m}$ (so $\widehat{n}, \widehat{m} \geq 1$ are relatively prime integers). If $\omega$ is a $d$-th primitive root of unity, then we may see that 
$$V_{n,m}=\left\{(z,w) \in {\mathbb D}^{2}: \Pi_{k=0}^{d-1} (z^{\widehat{n}}-\omega^k w^{\widehat{m}})=0\right\},$$
which is homeomorphic to the union of $d$ copies of ${\mathbb D}$ glued at their centers. In particular, if $d=1$, then again $V_{n,m}$ is holomorphically equivalent to ${\mathbb D}$. 
If $d \geq 2$, then the point $P$ is called {\it singular}, and the locus of singular points of $X$, denoted by ${\rm Sing}_{X}$ (or just by ${\rm Sing}$ if the context is clear) is a discrete subset of $X$. It follows that each connected component $Y$ of $X-{\rm Sing}_{X}$ has the structure of a Riemann surface, and the points in $\mbox{Sing}_{X}$ define punctures on it. By adding these punctures (coming from the singular points), we obtain another Riemann surface $Z$, containing $Y$, called an {\it irreducible component} of $X$. If $X$ has only one irreducible component then it is called {\it irreducible}; otherwise, it is called {\it reducible}.

\begin{rema}
If $X$ is a compact singular Riemann surface, then ${\rm Sing}_{X}$ is a finite set, $Z$ is a compact Riemann surface and $Y$ is an analytically finite Riemann surface. Note that, in this case, if for each singular point we have $d=2$, then $X$ is a stable Riemann surface (see, for example, the Bers' definition in \cite{Bers}). 
\end{rema}

Let us consider two singular Riemann surfaces $X$ and $Y$. By an {\it isomorphism} between $X$ and $Y$ we mean a homeomorphism $F:X \to Y$ such that $F({\rm Sing}_{X})={\rm Sing}_{Y}$ and the restriction $F:X-{\rm Sing}_{X} \to Y-{\rm Sing}_{Y}$ is holomorphic.

\subsection{Fiber product of Riemann surfaces}\label{fiberproductdef}
In this section we restrict to the fiber product of Riemann surfaces and holomorphic maps.
Let us fix three connected Riemann surfaces $S_{0}$, $S_{1}$ and $S_{2}$ together with two surjective holomorphic maps
$\beta_{1}:S_{1} \to S_{0}$ and $\beta_{2}:S_{2} \to S_{0}$. Consider the corresponding fiber product
\begin{equation} \label{ast} 
S_{1} \times_{(\beta_{1},\beta_{2})} S_{2}=\{(z_{1},z_{2}) \in S_{1} \times S_{2}: \beta_{1}(z_{1})=\beta_{2}(z_{2})\}
\end{equation}
and its associated map $\beta:S_{1} \times_{(\beta_{1},\beta_{2})} S_{2} \to S_{0}$ such that $\beta=\beta_{1} \circ \pi_{1}=\beta_{2} \circ \pi_{2}$, where $\pi_{j}:S_{1} \times_{(\beta_{1},\beta_{2})} S_{2} \to S_{j}$ is the projection $\pi_{j}(z_{1},z_{2})=z_{j}$.

\begin{prop}\label{propo1}
The fiber product $S_{1}\times_{(\beta_{1},\beta_{2})} S_{2}$ is a singular Riemann surface. Let  $P^{0}=(z^{0}_{1},z^{0}_{2}) \in S_{1}\times_{(\beta_{1},\beta_{2})} S_{2}$, let $n_{j}$ be the local degree of $\beta_{j}$ at $z^{0}_{j}$ (for $j=1,2$) and let $d$ be the greatest common divisor of $n_{1}$ and $n_{2}$.
Then $P^{0}$ is a singular point if and only if $d \geq 2$, in which case, it has a neighborhood of the form $V_{n_{1},n_{2}}$.
\end{prop}
\begin{proof} 
We start the proof by observing that the fiber product inherits structure of a one-dimensional complex space as a subset of the complex surface $S_{1} \times S_{2}$. 
Let $P^{0}=(z^{0}_{1},z^{0}_{2}) \in S_{1}\times_{(\beta_{1},\beta_{2})} S_{2}$ and assume that $\beta_{1}$ has local degree $n_{1} \geq 1$ and that $\beta_{2}$ has local degree $n_{2} \geq 1$ at $z_{2}$. Consider local coordinates $z_{j}:U_{j} \subset S_{j} \to {\mathbb D}$,  $z_{j}(z_{j}^{0})=0$, such that $\beta_{j}(z_{j})=z_{j}^{n_{j}}$, and therefore a neighborhood of $P^0$ can be identified with 
$V_{n_{1},n_{2}}=\{(z_{1},z_{2}) \in {\mathbb D}^{2}: z_{1}^{n_{1}}=z_{2}^{n_{2}}\} \subset {\mathbb C}^{2},$ where $P^0$ is identified with $(0,0)$, and $\beta|_{V_{n_{1},n_{2}}}(z_{1},z_{2})=z_{1}^{n_{1}}=z_{2}^{n_{2}}$. 
It follows that the fiber product is a singular Riemann surface and that $P^{0}$ is a singular point if and only if $d \geq 2,$ where $d$ greatest common divisor of $n_1$ and $n_2.$ 
\end{proof}

\begin{rema}\label{obs2-2}
(1) If, for $j=1,2$, the map $\beta_{j}$ is a regular branched cover with deck group $G_{j}<{\rm Aut}(S_{j})$, then the map $\beta$ is the quotient map given by the action of the direct product $G_{1} \times G_{2}$ on the fiber product.
(2) If $S_{1}$ and $S_{2}$ are both compact Riemann surfaces (so it is $S_{0}$), then the fiber product (a closed subset of the compact complex surface $S_{1} \times S_{2}$) is compact. 
\end{rema}

\begin{rema}[A universal property]\label{universal}
As a consequence of the universal property of the fiber product, if $Y$ is a Riemann surface and, for $j=1,2$, there is a holomorphic map
$p_{j}:Y \to S_{j}$  such that $\beta_{1} \circ p_{1} = \beta_{2} \circ p_{2}$, then there exists a unique holomorphic map $h:Y \to S_{1} \times_{(\beta_{1},\beta_{2})} S_{2}$ such that $p_{j}=\pi_{j} \circ h$. 
 \end{rema}

\subsection{On the irreducible components}
As stated in the introduction of the paper, we have that 
$$S_{1} \times_{(\beta_{1},\beta_{2})}S_{2}-{\rm Sing}=\bigcup_{k} \widetilde{R}_{k} \subset \bigcup_{k} R_{k},$$ 
where $\widetilde{R}_{k} \subset R_{k}$, each $R_{k}$ is a connected Riemann surface and   
$R_{k}-\widetilde{R}_{k}$ is the collection of punctures coming from the points in ${\rm Sing}$.

Let $B \subset S_{0}$ be the discrete subset consisting of the union of the branch values of the maps $\beta_{1}$ and $\beta_{2}$. Set 
$S_{0}^{*}=S_{0}-B$, $S_{1}^{*}=S_{1}-\beta_{1}^{-1}(B)$ and $S_{2}^{*}=S_{2}-\beta_{2}^{-1}(B)$.
The restriction of $\beta_{j}$ from $S_{j}^{*}$ to $S_{0}^{*}$, which will be still denoted with the same symbol, is an unbranched holomorphic surjective map. We may now consider the fiber product of the new pairs $(S_{1}^{*},\beta_{1})$ and $(S_{2}^{*},\beta_{2})$. It is not difficult to see that $\mbox{Sing} \subset \beta^{-1}(B)$, that  $S_{1}^{*} \times_{(\beta_{1},\beta_{2})} S_{2}^{*}=(S_{1} \times_{(\beta_{1},\beta_{2})} S_{2})-\beta^{-1}(B)$, and
$$S_{1}^{*} \times_{(\beta_{1},\beta_{2})} S_{2}^{*}= \bigcup_{k \in J} R^{*}_{k},$$ 
where $R^{*}_{k} \subset \widetilde{R}_{k}$ and 
$\widetilde{R}_{k}-R^{*}_{k}$ consists of those points $(z_{1},z_{2}) \in \widetilde{R}_{k}$ for which at least one of the coordinates $z_{j}$ is the preimage of a branch value of $\beta_{j}$. The map $\beta$ restricts to a surjective holomorphic map  $\beta: S_{1}^{*} \times_{(\beta_{1},\beta_{2})} S_{2}^{*} \to S_{0}^{*}$, and the restrictions $\pi_j:R^{*}_{k} \to S_{j}^{*}$, $j=1,2$, satisfies that $\beta_{1} \circ \pi_{1}=\beta_{2} \circ \pi_{2}=\beta$.

A consequence of the universal property of the fiber product (see Remark \ref{universal}), is the following fact.

\begin{lemm}\label{lemita}
If $Y$ is a connected Riemann surface and $p_{j}:Y \to S_{j}^{*}$ are surjective holomorphic maps so that $\beta_{1} \circ p_{1}=\beta_{2} \circ p_{2}$, then there exists  an irreducible component $R^{*}_{k}$ and a holomorphic map $h:Y \to R^{*}_{k}$  so that $p_{j}=\pi_{j} \circ h$. 
\end{lemm}

The following result provides an upper bound on the number of irreducible components of the fiber product in the case that both holomorphic maps $\beta_{j}$ are of finite degree.

\begin{prop}\label{cota}
If $\beta_{1}$ and $\beta_{2}$ both have finite degree, then 
the number of irreducible components of the fiber product of the two pairs $(S_{1},\beta_{1})$ and $(S_{2},\beta_{2})$
is at most the greatest common divisor of the degrees of $\beta_1$ and $\beta_2$.
\end{prop}
\begin{proof}
Keeping the previous notations, each of the covers $\beta_{j}:S^{*}_{j} \to S^{*}_{0}$ is unbranched of degree $d_{j}$. Let
$G$ be the fundamental group of $S^{*}_{0}$. Then these covers correspond to $G$-sets $E_{1}$
and $E_{2}$ of cardinality $d_{1}$ and $d_{2}$, on which the group $G$ acts transitively. We
get subgroups $H_{1}$ and $H_{2}$ of $G$ by taking the stabilizer of chosen elements $e_{1} \in E_{1}$
and $e_{2} \in E_{2}$, which remain well-defined up to conjugacy if these
elements are chosen differently.
Using the categorical definition of the fiber product, one shows that the fiber product
$S_{1}^{*} \times_{(\beta_{1},\beta_{2})} S_{2}^{*}$ corresponds to the $G$-set $E_{1} \times E_{2}$. This $G$-set may not be
transitive any longer. It is if and only if the fiber product is irreducible.
This happens if and only if the stabilizer of some (hence any) point is of
index $d_{1} d_{2}$ in G, that is, if and only if $H_{1} \cap H_{2}$ has index $d_{1} d_{2}$
in $G$.
In any case, the stabilizers involved have index divisible by both $d_{1}$ and $d_{2}$,
since they are contained in conjugates of both $H_{1}$ and $H_{2}$. Therefore these
indices are all at least ${\rm lcm} (d_1, d_2)$. This implies that the number of orbits
is at most $d_{1} d_{2} / {\rm lcm} (d_1, d_2) = \gcd (d_1, d_2)$. But this number of orbits
is nothing but the number of irreducible components of the fiber product.
\end{proof}

\begin{example}[{\bf Fiber product of two Fermat curves}]\label{ejemplo7}
Let $m,n \ge 1$ be integers and consider
$$\begin{array}{c}
\beta_{1}:S_{1}=\{[x_{1}:x_{2}:x_{3}] \in {\mathbb P}_{\mathbb C}^{2}: x_{1}^{n}+x_{2}^{n}+x_{3}^{n}=0\} \to \widehat{\mathbb C}: [x_{1}:x_{2}:x_{3}] \mapsto -( \tfrac{x_{2}}{x_{1}} )^{n},\\
\beta_{2}:S_{2}=\{[y_{1}:y_{2}:y_{3}] \in {\mathbb P}_{\mathbb C}^{2}: y_{1}^{m}+y_{2}^{m}+y_{3}^{m}=0\}\to \widehat{\mathbb C}:[y_{1}:y_{2}:y_{3}] \mapsto -( \tfrac{y_{2}}{y_{1}})^{m}.
\end{array}
$$ 

Note that $\beta_{1}$ (respectively, $\beta_{2}$) is a branched regular covering map whose deck group is the abelian group ${\mathbb Z}_{n}^{2}$ (respectively, ${\mathbb Z}_{m}^{2}$).  The fiber product $S_{1} \times_{(\beta_{1},\beta_{2})} S_{2}$ is represented by the algebraic curve in  ${\mathbb P}_{\mathbb C}^{2} \times {\mathbb P}_{\mathbb C}^{2}$ with coordinates $([x_{1}:x_{2}:x_{3}],[y_{1}:y_{2}:y_{3}])$ given by the equations
$$
x_{1}^{n}+x_{2}^{n}+x_{3}^{n}=0, \, y_{1}^{m}+y_{2}^{m}+y_{3}^{m}=0, \; x_{2}^{n} y_{1}^{m}=x_{1}^{n} y_{2}^{m}.$$

If $d=\gcd(n,m)$, then Proposition \ref{cota} asserts that the number of such irreducible components should be at most $d^{2}$. We claim that in this example such a number is exactly $d^{2}$. In fact, if $n=d n_{1}$ and $m=d m_{1}$ and $\omega_{d}$ is a primitive $d$-root of unity, then an affine model $S$, by taking $x_{1}=y_{1}=1$, is given by
$$\begin{array}{lll}
S&=& \{(x_{2},x_{3},y_{2},y_{3}) \in {\mathbb C}^{4}: 1+x_{2}^{n}+x_{3}^{n}=0, 1+y_{2}^{m}+y_{3}^{m}=0, x_{2}^{n}=y_{2}^{m}\}\\
&=&\{(x_{2},x_{3},y_{2},y_{3}) \in {\mathbb C}^{4}: 1+x_{2}^{n}+x_{3}^{n}=0, x_{3}^{n}=y_{3}^{m} , x_{2}^{n}=y_{2}^{m}\}\\
&=&\bigcup_{r,s=0}^{d-1} S_{r,s},
\end{array}
$$
where
$$S_{r,s}=\{(x_{2},x_{3},y_{2},y_{3}) \in {\mathbb C}^{4}: 1+x_{2}^{n}+x_{3}^{n}=0, x_{3}^{n_{1}}=\omega_{d}^{r}y_{3}^{m_{1}} , x_{2}^{n_{1}}=\omega_{d}^{s} y_{2}^{m_{1}}\}.$$

Note that, by setting $Y_{3}:=\omega_{d}^{r/m_{1}}y_{3}$ and $Y_{2}:=\omega_{d}^{s/m_{1}}y_{2}$, we may see that $S_{r,s}$ is isomorphic to $S_{0,0}$ (see Corollary \ref{isomorfo}).
As $n_{1}$ and $m_{1}$ are relatively prime, for each $(x_{2},x_{3},y_{2},y_{3}) \in S_{0,0}$, there is a pair $(t_{2},t_{3}) \in {\mathbb C}^{2}$ so that $x_{2}=t_{2}^{m_{1}}$, $y_{2}=t_{2}^{n_{1}}$, $x_{3}=t_{3}^{m_{1}}$ and $y_{3}=t_{3}^{n_{1}}$; moreover, $1+t_{2}^{dn_{1}m_{1}}+t_{3}^{dn_{1}m_{1}}=0$. This permits to see that $S_{0,0}$ (so each $S_{r,s}$) is isomorphic to the Fermat curve (a compact Riemann surface)
$$\{[w_{1}:w_{2}:w_{3}] \in {\mathbb P}_{\mathbb C}^{2}: w_{1}^{dn_{1}m_{1}}+w_{2}^{dn_{1}m_{1}}+w_{3}^{dn_{1}m_{1}}=0\}.$$
\end{example}

\subsection{A Fuchsian group description of the connected components $R^{*}_{k}$}\label{Sec:Kleinian}
Let us continue with the previous notations.
Let us assume that $S_{0}^{*}$ (so $S_{1}^{*}$ and $S_{2}^{*}$) is a hyperbolic Riemann surface. Then there is a Fuchsian group $\Gamma_{0}$ acting on the hyperbolic plane ${\mathbb H}^{2}$ such that $S_{0}^{*}$ is conformally equivalent to ${\mathbb H}^{2}/\Gamma_{0}$. As a consequence of covering theory (see, for example, the book  \cite{Massey}), there is a finite index subgroup $\Gamma_{j}$ of $\Gamma_{0}$ so that ${\mathbb H}^{2}/\Gamma_{j}$ is isomorphic as Riemann surface to $S_{j}^{*}$ and the covering $\beta_{j}:S_{j}^{*} \to S_{0}^{*}$ is realized by the inclusion of $\Gamma_{j}$ in $\Gamma_{0}$.

\begin{rema}\label{obs:isofiber1}
\begin{enumerate} \mbox{}
\item The non-hyperbolic situation can be carried out in a similar way, by replacing $\mathbb{H}^2$ by either the complex plane of the Riemann sphere.
\item The choice of the Fuchsian group  $\Gamma_{j}$ above is unique up to conjugation by an element of $\Gamma_{0}$, that is, we may replace $\Gamma_{j}$ by $\gamma \Gamma_{j} \gamma^{-1}$, for each $\gamma \in \Gamma_{0}$.
\item For $j=1,2$, let $T_{j}^{*}$ be a connected Riemann surface, $\delta_{j}:T_{j}^{*} \to S_{0}^{*}$ be an unbranched surjective holomorphic map, and assume that there exists an isomorphism $\phi_{j}:S_{j}^{*} \to T_{j}^{*} $  such that $\beta_{j} = \delta_{j} \circ \phi_{j}$. Then $(\phi_{1},\phi_{2})$ induces an isomorphism between the fiber products $S_{1}^{*} \times_{(\beta_{1},\beta_{2})}S_{2}^{*}$ and $T_{1}^{*} \times_{(\delta_{1},\delta_{2})}T_{2}^{*}$.
\end{enumerate}
\end{rema}

\begin{lemm}\label{minimalcomponent}
With the same notations as before, the following statements are equivalent:
\begin{enumerate}
\item $X$ is a connected component of the fiber product $S_{1}^{*} \times_{(\beta_{1},\beta_{2})}S_{2}^{*}$.
\item $X$ is isomorphic to the quotient $\mathbb{H}/K$ where $K=\gamma_{1} \Gamma_{1} \gamma_{1}^{-1} \cap \gamma_{2} \Gamma_{2} \gamma_{2}^{-1}$ for some $\gamma_{1}, \gamma_{2} \in \Gamma_{0}.$
\end{enumerate}
\end{lemm}

\begin{proof}
Let us assume that $X$ is isomorphic to $\mathbb{H}/K$ with $K$ as in (2).  By Remark \ref{obs:isofiber1} we can assume, without loss of generality, that $K=\Gamma_{1} \cap \Gamma_{2}.$ Clearly, the inclusion of $K=\Gamma_1 \cap \Gamma_2$ in $\Gamma_{j}$ induces a holomorphic map $p_{j}:X \to S_{j}$ such that $\beta_{1} \circ p_{1}=\beta_{2} \circ p_{2}$. By Lemma \ref{lemita}, there exists a connected component $R$ of the fiber product and a holomorphic map $h:X \to R$  so that $p_{j}=\pi_{j} \circ h$. But this implies that $R$ is isomorphic to a quotient of the form ${\mathbb H}^{2}/\Gamma'$, where $\Gamma'$ is a Fuchsian group which contains $K=\Gamma_1 \cap \Gamma_2$ and it is contained in $\Gamma_{1}$ and in $\Gamma_{2}$. This asserts that $K=\Gamma'$ and therefore $X(=R)$ is a connected component of the fiber product, as desired.

Conversely, let $X$ be a connected component of the fiber product $S_{1}^{*} \times_{(\beta_{1},\beta_{2})}S^{*}_{2}$. As we have the restriction $\pi_{j}:X \to S^{*}_{j}$ such that $\beta_{1} \circ \pi_{1}=\beta_{2} \circ \pi_{2}$, there must be a subgroup $\Gamma_{X} \leq \Gamma_{1} \cap \Gamma_{2}$ such that $X={\mathbb H}^{2}/\Gamma_{X}$. It follows that there is covering $h:X \to R^{*}={\mathbb H}^{2}/(\Gamma_{1} \cap \Gamma_{2})$ such that $\pi_{j} \circ h=\pi_{j}$ (see also  Lemma \ref{lemita}).  In particular, if $X$ is not of the claimed form, then we may delete it from the fiber product in order to produce a contradiction to the universal property (at the level of topological spaces) of the fiber product.
\end{proof}

\begin{rema}\label{obs:isofiber3} Observe that for $\gamma_{1}, \gamma_{2} \in \Gamma_{0}$, the Fuchsian groups $$\gamma_{1} \Gamma_{1} \gamma_{1}^{-1} \cap \gamma_{2} \Gamma_{2} \gamma_{2}^{-1} \mbox{ and }\Gamma_{1} \cap \gamma_{3} \Gamma_{2} \gamma_{3}^{-1},$$ where $\gamma_{3}=\gamma_{1}^{-1} \circ \gamma_{2}$, are conjugate by $\gamma_{1}$. This shows that each connected component of $S_{1}^{*} \times_{(\beta_{1},\beta_{2})}S_{2}^{*}$ is isomorphic to a quotient of the form ${\mathbb H}^{2}/(\Gamma_{1} \cap \gamma \Gamma_{2} \gamma^{-1})$ for a suitable $\gamma \in \Gamma_0.$ 
\end{rema}

\begin{coro}\label{isomorfo} 
If one of the maps $\beta_{j}:S_{j} \to S_{0}$ is a regular (branched) cover, then all connected components of the fiber product $S^{*}_{1} \times_{(\beta_{1},\beta_{2})} S^{*}_{2}$ are isomorphic. In particular, all irreducible components of $S_{1} \times_{(\beta_{1},\beta_{2})} S_{2}$ are isomorphic.

\end{coro}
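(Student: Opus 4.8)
The plan is to deduce Corollary~\ref{isomorfo} directly from Corollary~\ref{corofuchsian}, the only extra work being (a) to check that the hypothesis ``$\beta_{j}$ is a regular branched cover'' translates into normality of the corresponding Fuchsian group $\Gamma_{j}$ in $\Gamma_{0}$, and (b) to promote the isomorphisms between the analytically finite surfaces $T_{k}$ to isomorphisms between their compactifications $\overline{T}_{k}$.

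For (a): assume, say, that $\beta_{1}:S_{1}\to S_{0}$ is a regular branched cover, so there is a finite group $G\le \mathrm{Aut}(S_{1})$ acting transitively on each fiber of $\beta_{1}$ with $S_{1}/G\cong S_{0}$. Since the finite set $B$ was chosen to contain every branch value of $\beta_{1}$ (and of $\beta_{2}$), the restriction $\beta_{1}:S_{1}^{*}\to S_{0}^{*}$ is an unbranched covering; and as $\beta_{1}^{-1}(B)$ is $G$-invariant, $G$ still acts on $S_{1}^{*}$ with quotient $S_{0}^{*}$. If some nontrivial $g\in G$ fixed a point $p\in S_{1}^{*}$, then $p$ would be a ramification point of $\beta_{1}$ and hence $\beta_{1}(p)\in B$, contradicting $p\in S_{1}^{*}$; thus $G$ acts freely on $S_{1}^{*}$, so $\beta_{1}:S_{1}^{*}\to S_{0}^{*}$ is a regular (Galois) unbranched covering. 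Passing to the uniformizing Fuchsian groups, this says precisely that $\Gamma_{1}$ may be chosen normal in $\Gamma_{0}$, with $\Gamma_{0}/\Gamma_{1}\cong G$.

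Now Corollary~\ref{corofuchsian}, applied with $\Gamma_{1}\trianglelefteq\Gamma_{0}$, gives that all the components $T_{1},\ldots,T_{n}$ are mutually isomorphic as analytically finite Riemann surfaces. For (b), recall that every analytically finite $T_{k}$ sits inside a unique closed Riemann surface $\overline{T}_{k}$ (see \cite{Maskit:AF}), and that the set of punctures is intrinsic to $T_{k}$; hence any biholomorphism $T_{k}\to T_{\ell}$ extends uniquely to a biholomorphism $\overline{T}_{k}\to\overline{T}_{\ell}$. Since the irreducible components of $S_{1}\times_{(\beta_{1},\beta_{2})}S_{2}$ are precisely $\overline{T}_{1},\ldots,\overline{T}_{n}$, they are pairwise isomorphic, as claimed.

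I expect the main (though still mild) obstacle to be step (a): one must be careful that ``regular branched cover'' really forces $\Gamma_{1}$ to be normal in $\Gamma_{0}$. The place where the setup is genuinely used is that $B$ collects the branch values of \emph{both} maps, so that over $S_{0}^{*}$ the cover $\beta_{1}$ has lost all of its ramification and the deck group $G$ acts freely; this is exactly what makes Corollary~\ref{corofuchsian} applicable. Everything else is formal.
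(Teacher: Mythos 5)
Your proposal is correct and follows essentially the same route as the paper: translate regularity of $\beta_{j}$ into normality of $\Gamma_{j}$ in $\Gamma_{0}$ and then invoke the Fuchsian description of the components (the paper applies Theorem \ref{fuchsiandescription} directly, noting $\Gamma_{1}\cap\gamma\Gamma_{2}\gamma^{-1}=\Gamma_{1}\cap\Gamma_{2}$ for all $\gamma$, which is the same content as your appeal to Corollary \ref{corofuchsian}). Your steps (a) and (b) merely spell out details the paper leaves implicit, and both are sound.
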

\begin{proof}
If $\beta_{2}:S_{2} \to S_{0}$ is a regular (branched) cover, then $\beta_{2}:S^{*}_{2} \to S^{*}_{0}$ is a regular cover. Then, 
$\Gamma_{2}$ is a normal subgroup of $\Gamma_{0}$. Thus the result follows from Lemma \ref{minimalcomponent} and Remark \ref{obs:isofiber3}.
\end{proof}

\begin{rema}\label{obs:isofiber4}
In Example \ref{2componentes} we will provide a fiber product $S^{*}_{1} \times_{(\beta_{1},\beta_{2})}S_{2}^{*}$ with exactly two components, one is a punctured sphere and the other is a punctured torus. In this example, $S_{0}^{*}=\widehat{\mathbb C}-\{\infty,-1, (37 \pm 45 i\sqrt{15})/512\}$, and $S_{1}^{*}=S_{2}^{*}$ is a seven-punctured sphere. Inside $\Gamma_{0}$ (a free group of rank three) we may choose $\Gamma_{1}=\Gamma_{2}$ (a free group of rank nine). As $\beta_{1}$ is not a regular covering, the group $\Gamma_{1}$ is not a normal subgroup of $\Gamma_{0}$. In this case, if we take $\gamma \in \Gamma_{0}$ such that $\gamma \Gamma_{1} \gamma^{-1} \neq \Gamma_{1}$, then we obtain the components ${\mathbb H}^{2}/\Gamma_{1}$ and ${\mathbb H}^{2}/(\Gamma_{1} \cap \gamma \Gamma_{1} \gamma^{-1})$. The first one corresponds to the genus zero surface (which is isomorphic to $S_{0}^{*}$) and the second one is the genus one component.
\end{rema}

\section{On the connectedness and irreducibility of the fiber product}\label{Sec:irreducible}
In this section we study the connectedness and irreducibility of the fiber product.

\subsection{Connectedness in the compact situation}
Let us assume that all surfaces $S_{0}$, $S_{1}$ and $S_{2}$ are compact. In particular, the maps $\beta_{1}$ and $\beta_{2}$ will have finite degree. So, as previously observed, the number of irreducible components is at most the greatest common divisor of the two degrees.
In \cite{Fulton} it was observed that the fiber product is connected for $S_{0}=\widehat{\mathbb C}$.
The following examples show that the fiber product might not be connected in the case that $S_{0}$ has positive genus.

\begin{example}{[{\bf non-connected fiber product when $S_{0}$ has genus at least two}]}\label{ejemplo3-1}
Let us consider compact Riemann surfaces $S_{0}$ and $S$, both of genus at least two, and let  $\pi:S \to S_{0}$ be an unbranched covering map of degree $d \geq 2$. Take $S_{1}=S_{2}=S$, $\beta_{1}=\beta_{2}=\pi$.  Under the above conditions, it is possible to check that  the fiber product has no singularities. In particular, any irreducible component is a connected Riemann surface, and any two different irreducible components must be disjoint.
If $S_{1} \times_{(\beta_{1},\beta_{2})} S_{2}$ were connected, then (as $\beta$ has degree $d^{2}$) the genus of $S_{1} \times_{(\beta_{1},\beta_{2})} S_{2}$ would be strictly bigger than of $S$. Now, by taking $P_{j}:S \to S_{j}$ equal to the identity we see that $S$ covers $S_{1} \times_{(\beta_{1},\beta_{2})} S_{2}$, providing a contradiction to the Riemann-Hurwitz formula (since the genus of $S$ is strictly less than the genus of $S_{1} \times_{(\beta_{1},\beta_{2})} S_{2}$). 
\end{example}

\begin{example}{[{\bf non-connected fiber product when $S_{0}$ has genus one}]}\label{ejemplo3-2}
 For $\lambda \in {\mathbb C}-\{0,1\}$, let us consider the Riemann surfaces 
 $$\begin{array}{l}
S=\{[x_{1}:x_{2}:x_{3}:x_{4}] \in {\mathbb P}^{3}_{\mathbb C}: x_{1}^{2}+x_{2}^{2}+x_{3}^{2}=0, \; \lambda x_{1}^{2}+x_{2}^{2}+x_{4}^{2}=0\},\\
S_{0}=\{[y_{1}:\ldots :y_{5}] \in {\mathbb P}^{4}_{\mathbb C}: y_{3}^{2}=y_{1} y_{2}, \; y_{5}(y_{1}+y_{2})+y_{4}^{2}= \lambda y_{1}+y_{2}+y_{5}=0\}.
\end{array}
$$

The map $\pi:S \to S_{0}:[x_{1}:x_{2}:x_{3}:x_{4}] \mapsto [x_{1}^{2}:x_{2}^{2}:x_{1}x_{2}:x_{3}x_{4}:x_{4}^{2}]$
is an unbranched two-fold cover whose deck group is cyclic generated by the involution
$\tau([x_{1}:x_{2}:x_{3}:x_{4}])=[-x_{1}:-x_{2}:x_{3}:x_{4}].$ 
If  $S_{1}=S_{2}=S$ and $\beta_{1}=\beta_{2}=\pi$, then the fiber product in this case is
$$\begin{array}{l}
X=\left\{\left([x_{1}:x_{2}:x_{3}:x_{4}],[z_{1}:z_{2}:z_{3}:z_{4}]\right) \in \left({\mathbb P}^{3}_{\mathbb C}\right)^{2} : 
x_{1}^{2}+x_{2}^{2}+x_{3}^{2}=0,\right.\\
\left. \; \lambda x_{1}^{2}+x_{2}^{2}+x_{4}^{2}=0, 
z_{1}^{2}+z_{2}^{2}+z_{3}^{2}=0, \; \lambda z_{1}^{2}+z_{2}^{2}+z_{4}^{2}=0, \right. \\
\left. 
[x_{1}^{2}:x_{2}^{2}:x_{1}x_{2}:x_{3}x_{4}:x_{4}^{2}]=[z_{1}^{2}:z_{2}^{2}:z_{1}z_{2}:z_{3}z_{4}:z_{4}^{2}]
\right\}
\end{array}.
$$

The equality
$$[x_{1}^{2}:x_{2}^{2}:x_{1}x_{2}:x_{3}x_{4}:x_{4}^{2}]=[z_{1}^{2}:z_{2}^{2}:z_{1}z_{2}:z_{3}z_{4}:z_{4}^{2}]$$ in ${\mathbb P}^{4}_{\mathbb C}$ yields two possibilities; either 
$[z_{1}:z_{2}:z_{3}:z_{4}]=[x_{1}:x_{2}:x_{3}:x_{4}]$ or $[z_{1}:z_{2}:z_{3}:z_{4}]=[x_{1}:x_{2}:-x_{3}:-x_{4}].$
In this way, $X=A \cup B$, where
$$A=\left\{\left([x_{1}:x_{2}:x_{3}:x_{4}],[x_{1}:x_{2}:x_{3}:x_{4}]\right) \in \left({\mathbb P}^{3}_{\mathbb C}\right)^{2} : \right.$$
$$\left. x_{1}^{2}+x_{2}^{2}+x_{3}^{2}=0, \; \lambda x_{1}^{2}+x_{2}^{2}+x_{4}^{2}=0 \right\}$$
$$B=\left\{\left([x_{1}:x_{2}:x_{3}:x_{4}],[x_{1}:x_{2}:-x_{3}:-x_{4}]\right) \in \left({\mathbb P}^{3}_{\mathbb C}\right)^{2} : \right.$$
$$\left. x_{1}^{2}+x_{2}^{2}+x_{3}^{2}=0, \; \lambda x_{1}^{2}+x_{2}^{2}+x_{4}^{2}=0 \right\}$$

Clearly, $A \cap B=\emptyset$ and $A$ and $B$ are both isomorphic to $S$.
\end{example}

\subsection{On the irreducibility of the fiber product}
The following example, communicated to the first author by Gabino Gonz\'alez-Diez and already in \cite{H:fiberproduct}, shows that, even in the case that the fiber product is connected, it might be reducible.

\begin{example}\label{2componentes}
If $S_{1}=S_{2}=S_{0}=\widehat{\mathbb C}$ and $\beta_{1}(z)=\beta_{2}(z)=z(z^3+z^2+1)$, then an affine algebraic model of $S_{1} \times_{(\beta_1, \beta_2)} S_{2}$ is given by
$$\{(x,y) \in {\mathbb C}^{2}: x(x^{3}+x^{2}+1)=y(y^{3}+y^{2}+1)\}.$$
Since 
$$x(x^{3}+x^{2}+1)-y(y^{3}+y^{2}+1) =  
(x - y) (1 + x^2 + x^3 + x y + x^2 y + y^2 + x y^2 + y^3),$$
we can see that the  fiber product
consists of two irreducible components, one of them of genus zero and the other of genus one. See also Remark \ref{obs:isofiber4}.
\end{example}

The next result states sufficient conditions for the fiber product, when it is connected (for instance, if $S_{0}$ has genus zero), to be irreducible. 
Let us denote by {\it lcm} the least common multiple, and by {\it gcd} the greatest common divisor.

\begin{theo}\label{teo1}
Let $S_{0}$, $S_{1}$ and $S_{2}$ be connected Riemann surfaces, let 
$\beta_{1}:S_{1} \to S_{0}$ and $\beta_{2}:S_{2} \to S_{0}$ be two surjective holomorphic maps so that the fiber product $S_{1} \times_{(\beta_{1},\beta_{2})} S_{2}$ is connected. Assume that, for each $q \in S_{0}$ and each $j \in \{1,2\}$, the local degrees of $\beta_{j}$ at its preimages of $q$ is bounded (this happens, for instance, if the surfaces are compact or more general if the maps are of finite degree) and set  
$$a_{q}^{(j)}:={\rm lcm}\left({\rm ord}_{\beta_{j}}(z) : \beta_{j}(z)=q\right).$$ If either 
\begin{enumerate}
\item $\gcd({\rm deg}(\beta_{1}),  {\rm deg}(\beta_{2}))=1$; or 
\item $\gcd(a_{q}^{(1)},a_{q}^{(2)})=1$, for every $q \in S_{0}$, 
\end{enumerate} then the fiber product $S_{1} \times_{(\beta_{1},\beta_{2})} S_{2}$ is irreducible and, in particular, 
$S_{1} \times_{(\beta_{1},\beta_{2})} S_{2}$ is a connected Riemann surface.

\end{theo}
\begin{proof}
The singular points of $S_{1} \times_{(\beta_{1},\beta_{2})} S_{2}$ are those points $(z_{1},z_{2}) \in S_{1} \times S_{2}$ such that 
$\beta_{1}(z_{1})=\beta_{2}(z_{2})$ and so that, for each $j=1,2$, the point $z_{j} \in S_{j}$ is a critical point of $\beta_{j}$, that is,  ${\rm ord}_{\beta_{j}}(z_{j})=n_{j} \geq 2$. In local coordinates, we may assume that the singular point is $(0,0)$ and that $\beta_{j}(z)=z^{n_{j}}$; so 
a neighborhood of such a singular point looks (locally) like $\{(z,w) \in {\mathbb C}^{2}: z^{n_{1}}=w^{n_{2}}\}$.
In this way, if $\gcd(n_{1},n_{2})=d$, then the singular point $(z_{1},z_{2})$ has a neighborhood that looks like 
$d$ different cones glued along such a point. These are also the points of possible  intersection of two different irreducible components.

Let us assume that (1) holds. Let $R_{k}$ be any of the irreducible components of $S_{1} \times_{(\beta_{1},\beta_{2})} S_{2}$ and let 
$d_{1,k}={\rm deg}(\pi_{1}:R_{k} \to S_{1})$ and $d_{2,k}={\rm deg}(\pi_{2}:R_{k} \to S_{2})$. As $\beta=\beta_{1} \circ \pi_{1}=\beta_{2} \circ \pi_{2}$, it holds that 
$d_{1,k} \cdot {\rm deg}(\beta_{1})=d_{2,k} \cdot {\rm deg}(\beta_{2}) \leq {\rm deg}(\beta)={\rm deg}(\beta_{1}){\rm deg}(\beta_{2})$. In particular, 
$d_{1,k} \leq {\rm deg}(\beta_{2})$ and $d_{2,k} \leq {\rm deg}(\beta_{1})$.  Now, the condition $\gcd({\rm deg}(\beta_{1}),  {\rm deg}(\beta_{2}))=1$ asserts that $d_{j,k}={\rm deg}(\beta_{j})$, that is, the degree of the map $\beta_{j} \circ \pi_{j}:R_{k} \to S_{0}$ coincides with that of $\beta:S_{1} \times_{(\beta_{1},\beta_{2})} S_{2} \to S_{0}$. This ensures that the fiber product has only one irreducible component (the degree of $\beta$ is the sum of the degree of its restrictions to each of the irreducible components).

Let us now assume that (2) holds. Under the hypothesis, each of the singular points of $S_{1} \times_{(\beta_{1},\beta_{2})} S_{2}$ has a neighborhood homeomorphic to a disc.  As already we know that $S_{1} \times_{(\beta_{1},\beta_{2})} S_{2}$ is connected, the result follows. 
\end{proof}

\begin{rema}
We should mention that the case (1) in the previous theorem, for $S_{0}$ of genus zero, was previously obtained by Pakovich in \cite{Pakovich}; our arguments are similar.
\end{rema}

The following example shows that the sufficient conditions in Theorem \ref{teo1} are not necessary ones.

\begin{example}[{\bf the conditions of Theorem \ref{teo1} are not necessary}]\label{ejemplo2}
Let us consider $S_{0}=S_{1}=S_{2}=\widehat{\mathbb C}$, $\beta_{1}(z)=4z^{3}(1-z^{3})$ and $\beta_{2}(w)=-27w^{4}(w^2-1)/4$. 
In this case, ${\deg}(\beta_{1})=6={\rm deg}(\beta_{2})$, so Condition (1) of Theorem \ref{teo1} does not hold.
Also, as $\beta_{1}^{-1}(\infty)=\{\infty\}=\beta_{2}^{-1}(\infty)$ and ${\rm ord}_{\beta_{1}}(\infty)=6={\rm ord}_{\beta_{2}}(\infty)$, neither Condition (2) of Theorem \ref{teo1} holds. In this case, 
$$S_{1} \times_{(\beta_{1},\beta_{2})} S_{2}=\left\{([z:t],[w:s]) \in \mathbb{P}^1_{\mathbb{C}} \times \mathbb{P}^1_{\mathbb{C}}: 16z^{3}(t^{3}-z^{3})s^{6}+27w^{4}(w^{2}-s^{2})t^{6}=0  \right\}$$
has one singular point $p_{1}$ with a neighborhood being $6$ cones glued at their vertices (corresponding to the preimage of $\infty$). Let us consider the irreducible component $R$ containing the point $p_{2}=(0,0)$. If we restrict the corresponding projection maps $\pi_{j}:R \to \widehat{\mathbb C}$, then $\beta_{1} \circ \pi_{1}$ has at $p_{2}$ local degree a multiple of $4$ and $\beta_{2} \circ \pi_{2}$ has at $p_{2}$ local degree a multiple of $3$. As $\beta|_{R}=\beta_{1} \circ \pi_{1}|_{R}=\beta_{2} \circ \pi_{2}|_{R}$, its local degree at $p_{2}$ divides the least common multiple of $4$ and $3$, that is, it divides $12$. As the degree of $\beta$ is $12$, we 
may see that $R$ is the unique component. This an irreducible curve of geometric genus $7$.
\end{example}

\begin{example}[{\bf An example for Theorem \ref{teo1}}]\label{ejemplo1} Let us consider
$$
\begin{array}{c}
\beta_{1}:S_{1}=\left\{[x:y:z] \in {\mathbb P}_{\mathbb C}^{2}: y^{3}-x^{2}z+xz^{2}=0\right\} \to \widehat{\mathbb C}: [x:y:z] \mapsto \frac{x}{z},\\ 
\beta_{2}:S_{2}=\left\{[x_{1}:x_{2}:x_{3}] \in {\mathbb P}_{\mathbb C}^{2}: x_{1}^{2}+x_{2}^{2}+x_{3}^{2}=0\right\} \to \widehat{\mathbb C}: [x_{1}:x_{2}:x_{3}] \mapsto -\left( \frac{x_{2}}{x_{1}} \right)^{2}.
\end{array}
$$

Observe that $S_{1}$ has genus one and $S_{2}$ has genus zero. 
As ${\rm deg}(\beta_{1})=3$, ${\rm deg}(\beta_{2})=4$,  ${\gcd}({\rm deg}(\beta_{1}),{\rm deg}(\beta_{2}))=1$, so Theorem \ref{teo1} asserts that the fiber product is irreducible; in fact it can be represented by the algebraic curve in ${\mathbb P}_{\mathbb C}^{2} \times {\mathbb P}_{\mathbb C}^{2}$ with coordinates $([x:y:z],[x_{1}:x_{2}:x_{3}])$ given by the equations
$$
 x_{1}^{2}+x_{2}^{2}+x_{3}^{2}=0, \, y^{3}-x^{2}z+xz^{2}=0, \; x x_{1}^{2}=-z x_{2}^{2},$$
which is isomorphic to the following irreducible curve of genus $4$:
$$R=\left\{[y:v:w:t]: y^{3}t-v^{4}-v^{2}t^{2}=0, t^{2}+v^{2}+w^{2}=0\right\} \subset {\mathbb P}^{3}_{\mathbb C}.$$

The Riemann surface $R$ has the following automorphisms
$$T([y:v:w:t])=[e^{2 \pi i/3} y:v:w:t],$$
$$A([y:v:w:t])=[y:-v:w:t], \; B([y:v:w:t])=[y:v:-w:t],$$
so that $\langle T,A,B\rangle=\langle T \rangle \times \langle A,B \rangle \cong {\mathbb Z}_{3} \times {\mathbb Z}_{2}^{2}.$ The map $$F:R \to \widehat{\mathbb C}:[y:v:w:t] \mapsto x=\frac{w}{iv-t}$$
provides a regular branched cover with deck group $\langle T \rangle$. The branch values of $F$ are given by the points $\infty$, $0$, $\pm i$, $\pm 1$. It follows that $R$ can be also described by the cyclic trigonal gonal curve
$$y^{3}=x(x^{4}-1).$$

The group $\langle A, B \rangle$, under the map $F$, corresponds in this model to the group $\langle a(x,y)=(1/x, -y/x^{2}), b(x)=(-x,-y)\rangle$. 
\end{example}

\section{The strong field of moduli of the fiber product}\label{Sec:fieldofmoduli}
Throughout this section we shall assume $S_{0}=\widehat{\mathbb C}$, that $S_{1}$ and $S_{2}$ are compact Riemann surfaces defined by irreducible non-singular complex algebraic curves, and that $\beta_{j}$ are rational maps. We recall that under this assumption, the fiber product $S_{1} \times_{(\beta_{1},\beta_{2})} S_{2}$ is connected, and that $S_{0}^{\sigma}=S_{0}$ for every $\sigma \in {\rm Gal}({\mathbb C})$ (the group of field automorphisms of ${\mathbb C}$). 

\subsection{The field of moduli of pairs}
Let us consider a pair $(R,\eta)$, where $R$ is a compact Riemann surface and $\eta:R \to \widehat{\mathbb C}$ is a non-constant meromorphic map. As a consequence of the Riemann-Roch Theorem \cite{Roch}, we may assume that $R$ is a non-singular complex projective algebraic curve, say  given as the common zeroes of the homogeneous polynomials $P_{1},\ldots,P_{n}$, and that $\eta=Q_{1}/Q_{2}$, where $Q_{1}$ and $Q_{2}$ are homogeneous polynomials of the same degree.
If $\sigma \in {\rm Gal}({\mathbb C})$, then we denote by $P_{j}^{\sigma}$ (respectively, $Q_{j}^{\sigma}$) the polynomial obtained from $P_{j}$ (respectively, $Q_{j}$) by applying $\sigma$ to its coefficients. The polynomials $P_{1}^{\sigma},\ldots,P_{n}^{\sigma}$ define a non-singular complex projective algebraic curve $R^{\sigma}$ (homeomorphic to $R$) and $\eta^{\sigma}=Q_{1}^{\sigma}/Q_{2}^{\sigma}$ is a rational map on it.
We say that $(R^{\sigma},\eta^{\sigma})$ is isomorphic to $(R,\eta)$ if there is an isomorphism $f_{\sigma}:R \to R^{\sigma}$ so that $\eta^{\sigma} \circ f_{\sigma}=\eta$; we denote it by  $(R^{\sigma},\eta^{\sigma}) \equiv (R,\eta)$. The {\it field of moduli} of the pair $(R,\eta)$ is defined as the fixed field of the group 
$$G=\left\{\sigma \in {\rm Gal}({\mathbb C}): (R^{\sigma},\eta^{\sigma}) \equiv (R,\eta)\right\}.$$ 

It is a well-known fact that this field is contained in any field of definition of $(R,\eta)$, but it might be that it is not a field of definition. Both the computation of the field of moduli and the determination of whether or not the field of moduli is a field of definition are, in general, difficult problems (see, for instance, \cite{Earle,Hid2,Hid1,yoru2, Huggins,Kontogeorgis, yofm, Shimura}).  A consequence of Weil's descent theorem \cite{Weil} (see also \cite{rubenyo}), the field of moduli is a field of definition if $R$ has no non-trivial automorphisms. A result due to Wolfart \cite{Wolfart} asserts that if $R$ is quasiplatonic (i.e., when $R/{\rm Aut}(R)$ has genus zero and exactly three cone points) then the field of moduli is also a field of definition.

\subsection{The strong field of moduli}
Associated to the pair $(S_{1} \times_{(\beta_{1},\beta_{2})} S_{2},\beta)$, where $\beta=\beta_{1} \circ \pi_{1}=\beta_{2} \circ \pi_{2}$ and $\pi_{j}$ is the corresponding projection, is its field of moduli as in the previous section. 
If $\sigma \in {\rm Gal}({\mathbb C})$, then we have the new pairs $(S_{1}^{\sigma},\beta_{1}^{\sigma})$, $(S_{2}^{\sigma},\beta_{2}^{\sigma})$ and the corresponding pair
$(S_{1}^{\sigma} \times_{(\beta_{1}^{\sigma},\beta_{2}^{\sigma})} S_{2}^{\sigma},\beta^{\sigma})$. 
We say that the pair $(S_{1} \times_{(\beta_{1},\beta_{2})} S_{2},\beta)$ is equivalent to the pair $(S_{1}^{\sigma} \times_{(\beta_{1}^{\sigma},\beta_{2}^{\sigma})} S_{2}^{\sigma},\beta^{\sigma})$, denoted by the symbol
$$(S_{1} \times_{(\beta_{1},\beta_{2})} S_{2},\beta) \equiv^{s} (S_{1}^{\sigma} \times_{(\beta_{1}^{\sigma},\beta_{2}^{\sigma})} S_{2}^{\sigma},\beta^{\sigma}),$$ if there are isomorphisms
$$F_{1}:S_{1} \to S_{1}^{\sigma}, \;\; F_{2}:S_{2} \to S_{2}^{\sigma}$$
so that $\beta^{\sigma}_{j} \circ F_{j}=\beta_{j}$, for $j=1,2.$ ($S_{0}=\widehat{\mathbb C}$; so $S_{0}^{\sigma}=S_{0}$).
The {\it strong field of moduli of the fiber product pair} $(S_{1} \times_{(\beta_{1},\beta_{2})} S_{2},\beta)$ is defined as the 
fixed field of the group $$G=\left\{\sigma \in {\rm Gal}({\mathbb C}): (S_{1} \times_{(\beta_{1},\beta_{2})} S_{2},\beta) \equiv^{s} (S_{1}^{\sigma} \times_{(\beta_{1}^{\sigma},\beta_{2}^{\sigma})} S_{2}^{\sigma},\beta^{\sigma})\right\}.$$

\begin{rema}\label{diferencia}
Let us assume that $(S_{1} \times_{(\beta_{1},\beta_{2})} S_{2},\beta) \equiv^{s} (S_{1}^{\sigma} \times_{(\beta_{1}^{\sigma},\beta_{2}^{\sigma})} S_{2}^{\sigma},\beta^{\sigma}).$ By the definition, there are isomorphisms $F_{j}:S_{j} \to S^{\sigma}_{j}$ so that $\beta^{\sigma}_{j} \circ F_{j}=\beta_{j}$, for $j=1,2.$ We may 
consider the isomorphism
$$F:S_{1} \times_{(\beta_{1},\beta_{2})} S_{2} \to S_{1}^{\sigma} \times_{(\beta_{1}^{\sigma},\beta_{2}^{\sigma})} S_{2}^{\sigma},$$
 by the rule $F(z_{1},z_{2})=(F_{1}(z_{1}),F_{2}(z_{2}))$. This map $F$ satisfies that  $\pi_{j} \circ F=F_{j} \circ \pi_{j}$, for $j=1,2$ (observe that the projection maps $\pi_{j}:S_{1} \times_{(\beta_{1},\beta_{2})} S_{2} \to S_{j}$ are defined over ${\mathbb Q}$, so $\pi_{j}^{\sigma}=\pi_{j}$) and  
 that $\beta^{\sigma} \circ F=\beta$ ($S_{0}^{\sigma}=S_{0}$). So, we may see that the field of moduli of the pair $(S_{1} \times_{(\beta_{1},\beta_{2})} S_{2},\beta)$ is a subfield of its strong field of moduli. 
 \end{rema}

It can be seen from the definitions that the strong field of moduli of the fiber product pair always contains the fields of moduli of the pairs $(S_{1},\beta_{1})$ and $(S_{2},\beta_{2})$. The following result states that, in fact, the smallest field containing these two fields of moduli coincides with the strong field of moduli.

\begin{theo}\label{modulifield}
The strong field of moduli ${\mathcal M}$ of the fiber product pair $(S_{1} \times_{(\beta_{1},\beta_{2})} S_{2},\beta)$ is the smallest field containing the fields of moduli of the pairs $(S_{1},\beta_{1})$ and $(S_{2},\beta_{2})$.
\end{theo}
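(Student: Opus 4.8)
The plan is to rephrase the statement as an identity of subgroups of ${\rm Gal}(\overline{\mathbb Q}/{\mathbb Q})$. Write $M_{j}$ for the field of moduli of the pair $(S_{j},\beta_{j})$, let $G_{j}=\{\sigma\in{\rm Gal}(\overline{\mathbb Q}/{\mathbb Q}):(S_{j}^{\sigma},\beta_{j}^{\sigma})\equiv(S_{j},\beta_{j})\}$ (so $M_{j}$ is its fixed field), and write $G$ for the group defining the strong field of moduli $M_{s}$ of the fiber product. Since $(S_{1},\beta_{1})$, $(S_{2},\beta_{2})$, and hence also the fiber product together with its maps $\beta$, $\pi_{1}$, $\pi_{2}$, admit a common number field of definition $K$, each of $G_{1}$, $G_{2}$, $G$ contains ${\rm Gal}(\overline{\mathbb Q}/K)$, so all three are open (hence closed) subgroups. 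By the Galois correspondence this gives $G_{j}={\rm Gal}(\overline{\mathbb Q}/M_{j})$, $G={\rm Gal}(\overline{\mathbb Q}/M_{s})$, and ${\rm Gal}(\overline{\mathbb Q}/M_{1}M_{2})=G_{1}\cap G_{2}$, where the compositum $M_{1}M_{2}\subset\overline{\mathbb Q}$ is precisely the smallest field containing $M_{1}$ and $M_{2}$. Thus it suffices to prove the group identity
$$G=G_{1}\cap G_{2}.$$

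First I would show $G\subseteq G_{1}\cap G_{2}$. Given $\sigma\in G$, take isomorphisms $F$, $F_{1}$, $F_{2}$ as in the definition of $\equiv^{s}$, so $\beta^{\sigma}\circ F=\beta$ and $\pi_{j}\circ F=F_{j}\circ\pi_{j}$. Combining these with $\beta^{\sigma}=\beta_{1}^{\sigma}\circ\pi_{1}$ yields $\beta_{1}^{\sigma}\circ F_{1}\circ\pi_{1}=\beta_{1}\circ\pi_{1}$, and since $\pi_{1}:S_{1}\times_{(\beta_{1},\beta_{2})}S_{2}\to S_{1}$ is surjective (because $\beta_{2}$ is onto $\widehat{\mathbb C}$) this forces $\beta_{1}^{\sigma}\circ F_{1}=\beta_{1}$; hence $(S_{1}^{\sigma},\beta_{1}^{\sigma})\equiv(S_{1},\beta_{1})$, i.e. $\sigma\in G_{1}$, and symmetrically $\sigma\in G_{2}$. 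This is the inclusion already indicated in the text before the statement.

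The reverse inclusion $G_{1}\cap G_{2}\subseteq G$ is the main point, and the product structure of the fiber product is what makes it work. Let $\sigma\in G_{1}\cap G_{2}$ and choose isomorphisms $F_{j}:S_{j}\to S_{j}^{\sigma}$ with $\beta_{j}^{\sigma}\circ F_{j}=\beta_{j}$. Define $F(z_{1},z_{2})=(F_{1}(z_{1}),F_{2}(z_{2}))$. If $(z_{1},z_{2})$ lies in the fiber product, then $\beta_{1}^{\sigma}(F_{1}(z_{1}))=\beta_{1}(z_{1})=\beta_{2}(z_{2})=\beta_{2}^{\sigma}(F_{2}(z_{2}))$, so $F(z_{1},z_{2})\in S_{1}^{\sigma}\times_{(\beta_{1}^{\sigma},\beta_{2}^{\sigma})}S_{2}^{\sigma}$; thus $F$ is the restriction of the biholomorphism $F_{1}\times F_{2}$ of $S_{1}\times S_{2}$ onto $S_{1}^{\sigma}\times S_{2}^{\sigma}$, and it carries one fiber product bijectively onto the other, hence is an isomorphism of these (possibly singular) Riemann surfaces. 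In particular $F$ preserves connectedness, so the $\sigma$-conjugate fiber product is again connected. One then checks directly that $\beta^{\sigma}\circ F=\beta_{1}^{\sigma}\circ\pi_{1}\circ F=\beta_{1}^{\sigma}\circ F_{1}\circ\pi_{1}=\beta_{1}\circ\pi_{1}=\beta$ and $\pi_{j}\circ F=F_{j}\circ\pi_{j}$, so the triple $(F,F_{1},F_{2})$ witnesses $\equiv^{s}$ and $\sigma\in G$. Therefore $G=G_{1}\cap G_{2}$, and taking fixed fields gives $M_{s}=M_{1}M_{2}$.

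I expect the only real subtlety to be the bookkeeping in the first paragraph: one must know that the three moduli-groups are closed subgroups in order to pass from the group identity $G=G_{1}\cap G_{2}$ to the claimed equality of fields, and that ${\rm Gal}(\overline{\mathbb Q}/M_{1}M_{2})=G_{1}\cap G_{2}$. Both reduce to the standard fact — already implicit in the definition of field of moduli used in this section — that a curve together with a rational map to $\widehat{\mathbb C}$, and likewise a fiber product of two such, can always be defined over a number field. Everything else is a formal manipulation of the defining functional equations.
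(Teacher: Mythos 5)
Your proof is correct and takes essentially the same route as the paper: the decisive step, witnessing $\sigma\in G_{1}\cap G_{2}$ by the product isomorphism $F=F_{1}\times F_{2}$ on the fiber product, is exactly the paper's construction. You additionally spell out the easy inclusion $G\subseteq G_{1}\cap G_{2}$ (which the paper dispatches by citing its Remark) and the infinite-Galois-correspondence bookkeeping needed to pass from $G=G_{1}\cap G_{2}$ to the equality of fixed fields, both of which the paper leaves implicit.
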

\begin{proof}
Let us consider the subgroup $G_{j}=\left\{\sigma \in {\rm Gal}({\mathbb C}): (S_{j}^{\sigma},\beta_{j}^{\sigma}) \equiv (S_{j},\beta_{j})\right\}$ and its fixed field ${\mathbb K}_{j}$ (the field of moduli of the pair $(S_{j}, \beta_{j})$), for $j=1,2$. Let ${\mathbb K}$ be the smallest field containing ${\mathbb K}_{1}$ and ${\mathbb K}_{2}$. We need to prove that ${\mathcal M}={\mathbb K}$.
We start by proving that ${\mathcal M} \leq {\mathbb K}.$ Let $\sigma \in {\rm Gal}({\mathbb C})$ acting as the identity on ${\mathbb K}$. As $\mathbb{K}_j \subset \mathbb{K},$ $\sigma$ acts as the identity on ${\mathbb K}_{j}$, so it belongs to $G_{j}$ for $j=1,2$. It follows from the definition that there exists an isomorphism $F_{j}:S_{j} \to S^{\sigma}_{j}$ so that $\beta_{j}=\beta^{\sigma}_{j} \circ F_{j}$. Then $\sigma \in G=\left\{\sigma \in {\rm Gal}({\mathbb C}): (S_{1} \times_{(\beta_{1},\beta_{2})} S_{2},\beta) \equiv^{s} (S_{1}^{\sigma} \times_{(\beta_{1}^{\sigma},\beta_{2}^{\sigma})} S_{2}^{\sigma},\beta^{\sigma})\right\}$, showing that ${\mathcal M} \leq {\mathbb K}$.
Now, we proceed to prove that ${\mathbb K} \leq {\mathcal M}.$ As already observed above, it follows directly from the definitions that the strong field of moduli ${\mathcal M}$ necessarily contains ${\mathbb K}_{1}$ and ${\mathbb K}_{2}$; thus, we have ${\mathbb K}_{1}, {\mathbb K}_{2} \leq {\mathcal M} \leq {\mathbb K}.$ 
As ${\mathbb K}$ is the smallest subfield of ${\mathbb C}$ containing both ${\mathbb K}_{1}$ and ${\mathbb K}_{2}$, we obtain immediately that   
$\mathbb{K} \leq \mathcal{M}$.
\end{proof}

The following is an example for which the strong field of moduli of a fiber product pair contains strictly its field of moduli (as a pair); such an extension has degree two.

\begin{example}\label{Ejemplo:strong} 
Let $(S_{1},\beta_{1})$ be some pair defined over its field of moduli  ${\mathbb Q}(i)$. Example 4.57 in \cite[p. 262]{GiGo} provides such situation in genus zero. An example of genus one is given by taking $S_{1}$ the elliptic curve $y^{2}z=x(x-z)(x-\lambda z)$, where $j(\lambda)=i$ and $j$ is the Klein modular $j$-invariant function (its branch values are $\infty$, $0$ and $1$), and $\beta_{1}(x,y,z)=(j(x/z))^{4}$. Let ${\rm Gal}({\mathbb Q}(i)/{\mathbb Q})=\langle \sigma \rangle$, where $\sigma(i)=-i$. Set $S_{2}=S_{1}^{\sigma}$ and $\beta_{2}=\beta_{1}^{\sigma}$. Consider the pair $(S_2, \beta_2).$ The map
$F:S_{1} \times_{(\beta_{1},\beta_{2})} S_{2} \to S_{1}^{\sigma} \times_{(\beta_{1}^{\sigma},\beta_{2}^{\sigma})} S_{2}^{\sigma}$,  defined by  $F(z_{1},z_{2})=(z_{2},z_{1})$, is an isomorphism between singular Riemann surfaces. As $F$ satisfies
$$\beta^{\sigma} \circ F(z_{1},z_{2})=\beta^{\sigma}(z_{2},z_{1})=(\beta_{1} \circ \pi_{1})^{\sigma}(z_{2},z_{1})=\beta_{1}^{\sigma}(z_{2})=\beta_{2}(z_{2})=\beta(z_{1},z_{2}),$$
the field of moduli of the pair $(S_{1} \times_{(\beta_{1},\beta_{2})} S_{2},\beta)$ is ${\mathbb Q}$.  Since there is no possible isomorphism $F_{1}:S_{1} \to S_{1}^{\sigma}$ satisfying  $\beta_{1}^{\sigma} \circ F=\beta_{1}$ (as the field of moduli of $(S_{1},\beta_{1})$ is different from ${\mathbb Q}$), the strong field of moduli of the fiber product pair is ${\mathbb Q}(i)$.
\end{example}

\subsection{A remark on dessins d'enfants and their fiber products}\label{Sec:dessins}
Belyi's theorem \cite{Belyi} asserts that a compact Riemann surface $S$ can be defined by a curve defined over the field $\overline{\mathbb Q}$ of algebraic numbers if and only if there is a non-constant holomorphic map $\beta:S \to \widehat{\mathbb C}$ whose critical values are contained in the set $\{\infty,0,1\}$; we say that $S$ is a {\it Belyi curve}, that $\beta$ is a {\it Belyi map} for $S$ and that $(S,\beta)$ is a {\it Belyi pair} (or {\it dessin d'enfant}). 
Among all Belyi pairs the most interesting ones are the regular or quasiplatonic ones; these are the ones for which the Belyi map is  a regular branched cover.
Two Belyi pairs, $(S_{1},\beta_{1})$ and $(S_{2},\beta_{2})$, are called equivalent if there exists an isomorphism (holomorphic homeomorphism) $h:S_{1} \to S_{2}$ so that $\beta_{2} \circ h=\beta_{1}$. In this setting, Belyi's theorem asserts that  every Belyi pair $(S,\beta)$ is equivalent to a Belyi pair $(C,\beta_{C})$, where the algebraic curve $C$ and the rational map $\beta_{C}$ are both defined over $\overline{\mathbb Q}$. 
In this way, there is a natural action of the absolute Galois group ${\rm Gal}(\overline{\mathbb Q}/{\mathbb Q})$ on Belyi pairs. Such an action is known to be faithful \cite{GiGo1,GiGo,Gro,Sch}. In his Esquisse d'un Programme \cite{Gro}, Grothendieck pointed out that such an action may provide information on the internal structure of ${\rm Gal}(\overline{\mathbb Q}/{\mathbb Q})$ codified in terms of simple combinatorial objects. 
Let us consider two Belyi pairs $(S_{1},\beta_{1})$ and $(S_{2},\beta_{2})$, where we assume that $S_{j}$ is given as an algebraic curve over $\overline{\mathbb Q}$ and that $\beta_{j}$ is a rational map also defined over $\overline{\mathbb Q}$. Then its fiber product $S_{1} \times_{(\beta_{1},\beta_{2})}S_{2}$ is a connected, possibly reducible, algebraic curve defined over $\overline{\mathbb Q}$ and 
the map $\beta:S_{1} \times_{(\beta_{1},\beta_{2})}S_{2} \to \widehat{\mathbb C}$, defined by $\beta(z_{1},z_{2})=\beta_{j}(z_{j})$, is also a rational map defined over $\overline{\mathbb Q}$. Each irreducible component turns out to be a Belyi curve (and the restriction of $\beta$ to it a Belyi map). 
 If one of the conditions in Theorem \ref{teo1} holds, then the fiber product is irreducible; so it is again a Belyi pair. In this way, the fiber product provides of a tool to construct new dessins d'enfants from two given ones.

\section{Isogenous decomposition of the jacobian variety of fiber products}\label{Sec:jacobiana}
Let $G$ be a finite group acting on a (connected) compact Riemann surface $S$. It is classically known that this action induces an action of $G$ on the Jacobian variety $JS$ of $S$ and this, in turn, gives rise to a $G$-equivariant isogeny decomposition of $JS$ into abelian subvarieties (see, for instance, \cite{cr,l-r}). 
The decomposition of Jacobian varieties with group actions has been extensively studied in different settings, with applications to theta functions, to the theory of integrable systems and to the moduli spaces of principal bundles of curves, among others. The simplest case of such a decomposition is when $G$ is a group of order two; this fact was already noticed in 1895  by Wirtinger \cite{W} and used by Schottky and Jung in \cite{SJ}. For other special groups see, for example, \cite{d1, L2, I, K, d3, rubiyodos, d2, d4}. 
In \cite{KR}, Kani and Rosen studied relations among idempotents in the algebra of rational endomorphisms of an arbitrary abelian variety. By means of these relations, in the case of the Jacobian variety of a  compact Riemann surface $S$ with action of a group $G$, they succeeded in proving a decomposition theorem for $JS$ in which, under some assumptions, each factor is isogenous to the Jacobian of a quotient $S_H$ of $S$ by the action of a subgroup $H$ of $G.$ 
Recently, Rodr\'{\i}guez and the second author in \cite{rubiyo} provided a generalization of Kani-Rosen's result. For the sake of explicitness and for later use, we exhibit a particular case of this generalization.

\begin{prop}[\cite{rubiyo}] \label{rubireyes}
Let $H_1, H_2$ be groups of automorphisms of a (connected) compact Riemann surface $C.$ Then \begin{equation*}
JC \times JC_{\langle H_1, H_2 \rangle} \sim JC_{H_1} \times JC_{H_2} \times P
\end{equation*}
for some abelian subvariety $P$ of $JC$. 
\end{prop}

Let us consider two pairs $(S_{1},\beta_{1})$ and $(S_{2},\beta_{2})$, where $S_{j}$ is a compact Riemann surface and $\beta_{j}:S_{j} \to S_0$ is a regular holomorphic map, over a compact Riemann surface $S_0$, such that the fiber product $S_{1} \times_{(\beta_{1},\beta_{2})} S_{2}$ has structure of a connected Riemann surface. As a consequence of Proposition \ref{rubireyes}, under some conditions which avoid trivial cases, in the next we provide an isogenous decomposition of  the Jacobian $J (S_{1} \times_{(\beta_{1},\beta_{2})} S_{2})$ in such a way that it contains, simultaneously as factors, the Jacobians of the starting Riemann surfaces.

\begin{theo} \label{yo}
Let $(S_{1},\beta_{1})$ and $(S_{2},\beta_{2})$ be two pairs, where $S_{j}$ is a compact Riemann surface and $\beta_{j}:S_{j} \to S_0$ is a regular holomorphic map, over a compact Riemann surface $S_0$. Assume that the set of branch values of $\beta_{1}$ is disjoint from the set of branch values of $\beta_{2}$ and assume that $S_{1}$,  $S_{2}$ and its fiber product are not pairwise isomorphic. Then $$J(S_{1} \times_{(\beta_{1},\beta_{2})} S_{2}) \times JS_0   \sim JS_1 \times JS_2 \times P$$ for a suitable abelian subvariety $P$ of $J(S_{1} \times_{(\beta_{1},\beta_{2})} S_{2})$. In particular, if the genus of $S_0$ is zero, then $$J(S_{1} \times_{(\beta_{1},\beta_{2})} S_{2}) \sim JS_1 \times JS_2 \times P.$$
\end{theo} 

\begin{proof} Let $B_{j} \subset S_{0}$ be the collection of branch values of $\beta_{j}$. We are assuming $B_{1} \cap B_{2} = \emptyset$. We consider a Fuchsian group $\Gamma_{0}$ acting on the hyperbolic plane ${\mathbb H}^{2}$   such that ${\mathbb H}^{2}/\Gamma_{0}$ is the orbifold whose underlying Riemann surface is $S_{0}$ and its cone points  set is $B_{1} \cup B_{2}$ and the cone order are prescribed by $\beta_{1}$ and $\beta_{2}$, respectively. Inside $\Gamma_{0}$ we have, for each $j \in \{1,2\}$, a normal subgroup $\Gamma_{j}$ such that ${\mathbb H}^{2}/\Gamma_{j}$ is an orbifold whose underlying Riemann surface is $S_{j}$ and whose cone points are given by the set $\beta_{j}^{-1}(B_{3-j})$, and the regular cover $\beta_{j}$ is induced by the inclusion $\Gamma_{j}$ in $\Gamma_{0}$.  If we set $G_j \cong \Gamma_{0}/\Gamma_j$, then $G_j \le \mbox{Aut}(S_j)$ is the deck group associated to the holomorphic map $\beta_j$ for $j=1,2$. 
By the results of Section \ref{Sec:Kleinian}, the fiber product $S_{1} \times_{(\beta_{1},\beta_{2})} S_{2}$ (which is irreducible as $B_{1} \cap B_{2}=\emptyset$) is isomorphic to the quotient $\mathbb{H}^2/\Gamma_{12}$ where $\Gamma_{12}=\Gamma_1 \cap \Gamma_2.$ Note that the hypothesis asserts that $\Gamma_{1}$ and $\Gamma_2$ are non-conjugate in $\Gamma_{0}$; in particular, $\Gamma_{1} \neq \Gamma_{2}.$
As mentioned in Remark \ref{obs2-2}, the direct product $G_1 \times G_2$ acts naturally on $S_{1} \times_{(\beta_{1},\beta_{2})} S_{2}$ and is isomorphic to the deck group associated to the holomorphic map $\beta:S_{1} \times_{(\beta_{1},\beta_{2})} S_{2} \to S_{0},$ where $\beta=\beta_{1} \circ \pi_{1}=\beta_{2} \circ \pi_{2}.$ The deck group $H_j \le \mbox{Aut}(S_{1} \times_{(\beta_{1},\beta_{2})} S_{2})$ associated to the projection $\pi_j: S_{1} \times_{(\beta_{1},\beta_{2})} S_{2} \to S_j$ is $$H_1 \cong \Gamma_1/\Gamma_{12} \cong (\Gamma_1\cap \Gamma_0)/(\Gamma_1 \cap \Gamma_2)\cong \{id\} \times G_2 \cong G_2$$ $$H_2 \cong \Gamma_2/\Gamma_{12} \cong (\Gamma_0\cap \Gamma_2)/(\Gamma_1 \cap \Gamma_2)\cong G_1 \times \{id\} \cong G_1.$$
 
Now, as the group generated by $H_1$ and $H_2$ is isomorphic to $G_1 \times G_2,$ the quotient of $S_{1} \times_{(\beta_{1},\beta_{2})} S_{2}$ by the action of $\langle H_1, H_2 \rangle$ is isomorphic to $S_0.$ Thus, Proposition \ref{rubireyes} ensures the existence of an abelian subvariety $P$ of $J(S_{1} \times_{(\beta_{1},\beta_{2})} S_{2})$ such that $$J(S_{1} \times_{(\beta_{1},\beta_{2})} S_{2}) \times JS_0   \sim JS_1 \times JS_2 \times P.$$
\end{proof}


\end{document}